\DeclareMathAlphabet{\mathpzc}{OT1}{pzc}{m}{it}
\newtheorem{thm}{Theorem}[section]
\newtheorem{rem}[thm]{Remark}
\newcommand{\p}{\mathpzc{p}}
\newcommand{\bQ}{\mathbb Q}
\title{A note on partial coordinate system in a polynomial ring}
\author{Animesh Lahiri\\
{\small{\it Swami Vivekananda Research Centre}}\\
{\small{\it Ramakrishna Mission Vidyamandira}}\\
{\small{\it Belur Math, Howrah, India}}\\
{\small{\it e-mail: {255alahiri@gmail.com}}
}}
\begin{document}
\date{}
\maketitle
\abstract
\noindent 
In the paper \cite{BBE}, J. Berson, J. W. Bikker and A. van den Essen proved that for a non-zerodivisor $a$ in a commutative ring $R$ containing $\bQ$ if the polynomials $f_1,\dots,f_{n-1}$ in $R[X_1,\dots,X_n]$ form a partial coordinate system over the rings $R_a$ and $\dfrac{R}{aR}$ then $f_1,\dots,f_{n-1}$ form a partial coordinate system over the ring $R$. In this note we show that the theory of residual variables of Bhatwadekar-Dutta (\cite{BDr}) and its recent extension by Das-Dutta (\cite{DD}), extends their result to the case when $a$ is an arbitrary element of $A$. 
\smallskip
  
\noindent
{\small {{\bf Keywords.} Polynomial algebra, Coordinate, Residual coordinate.}}

\noindent
{\small {{\bf AMS Subject classifications (2010)}. Primary: 13B25; Secondary: 14R25}}.
\section{Introduction}
We will assume all rings to be commutative containing unity. Throughout this note we will use the notation $R[X_1,\dots,X_n]$ to mean a polynomial algebra
in $n$ variables over $R$. Sometimes we will denote this ring by $R^{[n]}$.

\medskip
\noindent
Let $A:=R[X_1,\dots,X_n]$. We recall that $m$ polynomials $f_1,\dots,f_m$ ($m\leq{n}$) in $A$ are said to form a partial coordinate system (of colength $n-m$) in $A$ if $A= R[f_1,\dots,f_m]^{[n-m]}$. If $m=n$ then we will say $f_1,\dots,f_n$ form a coordinate system in $A$. For an arbitrary $a\in{R}$, $f_1,\dots,f_m$ in $A$ are said to form an $a$-strongly partial residual coordinate system (of colength $n-m$) in $A$ if the images of $f_1,\dots,f_m$ form a partial coordinate system (of colength $n-m$) in $\dfrac{A}{aA}$ and also in $A_a$. We will say $f_1,\dots,f_m$ form a partial residual coordinate system (of colength $n-m$) in $A$  if, for each prime ideal
$\p$ of $R$ we have $k(\p)\otimes_R{A}=(k(\p)\otimes_{R}{R[f_1,\dots,f_m]})^{[n-m]}$, where $k(\p):=\dfrac{R_{\p}}{\p R_{\p}}$ is the residue field of $R$ at $\p$.

\medskip
\noindent
The following result has been proved by J. Berson, J. W. Bikker and A. van den Essen in \cite{BBE}.
\begin{thm}\label{essen}
Let $R$ be a ring containing $\bQ$, $a\in{R}$ a non-zerodivisor and $A=R[X_1,\dots,X_n]$. If $n-1$ polynomials $f_1,\dots,f_{n-1}$ in $A$
form an $a$-strongly partial residual coordinate system in $A$, then $f_1,\dots,f_{n-1}$ form a partial coordinate system in $A$.
\end{thm}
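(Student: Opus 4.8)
The plan is to deduce the statement from the theory of residual coordinates in two moves: first, to upgrade the $a$-strong hypothesis to the fibre-wise (residual) hypothesis of the definition above; and second, to invoke the residual-coordinate machinery of \cite{BDr,DD} to turn a residual coordinate system of colength $1$ into an honest partial coordinate system. In other words, I would factor the argument as ``$a$-strongly partial residual coordinate system $\Rightarrow$ partial residual coordinate system $\Rightarrow$ partial coordinate system'', where the first implication is elementary and general while the second carries all the weight.

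For the first move, I would fix a prime $\p$ of $R$ and verify directly that $k(\p)\otimes_R A=(k(\p)\otimes_R R[f_1,\dots,f_{n-1}])^{[1]}$, splitting into two cases according to whether $a\in\p$. If $a\notin\p$, then $a$ is a unit in $R_\p$, hence in $k(\p)$, so the structure map $R\to k(\p)$ factors as $R\to R_a\to k(\p)$; base-changing the given partial coordinate system in $A_a=R_a^{[n]}$ along $R_a\to k(\p)$ and using $k(\p)\otimes_{R_a}A_a=k(\p)\otimes_R A$ yields the claim. If $a\in\p$, then $\p$ is the contraction of a prime of $R/aR$ with the same residue field $k(\p)$, the structure map factors as $R\to R/aR\to k(\p)$, and base-changing the given partial coordinate system in $A/aA=(R/aR)^{[n]}$ along $R/aR\to k(\p)$ gives the claim via $k(\p)\otimes_{R/aR}(A/aA)=k(\p)\otimes_R A$. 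In either case the images of $f_1,\dots,f_{n-1}$ form a partial coordinate system of colength $1$ over $k(\p)$, so that $f_1,\dots,f_{n-1}$ form a partial residual coordinate system in $A$. I would emphasize that this case analysis never uses that $a$ is a non-zerodivisor, since every prime either contains $a$ or does not; this is precisely the source of the promised extension to arbitrary $a$.

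For the second move, I would appeal to the residual-coordinate results of Bhatwadekar--Dutta \cite{BDr} and their extension by Das--Dutta \cite{DD}: over a ring containing $\bQ$, a partial residual coordinate system of colength $1$ is a partial coordinate system. Setting $B=R[f_1,\dots,f_{n-1}]$, the residual hypothesis asserts $k(\p)\otimes_R A=(k(\p)\otimes_R B)^{[1]}$ for every $\p$, which encodes both that the $f_i$ behave fibre-wise as a coordinate system of $B$ and that $A$ is fibre-wise an $\A^1$-fibration over $B$; the cited theory promotes these fibre-wise facts to the global assertions $B=R^{[n-1]}$ and $A=B^{[1]}$, i.e.\ to $A=R[f_1,\dots,f_{n-1}]^{[1]}$.

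The main obstacle is exactly this second move, the passage from the fibre-wise data to the global statement. This is where the characteristic-zero hypothesis $R\supseteq\bQ$ is indispensable: it is what forces residual coordinates in colength $1$ to be genuine coordinates, a passage that fails in positive characteristic, and it is the entire content of the machinery of \cite{BDr,DD}, resting on locally nilpotent derivations, exponential automorphisms, and the triviality of $\A^1$-fibrations. Granting that input, the first move is routine and the theorem follows; and since that reduction is insensitive to the non-zerodivisor assumption on $a$, the same scheme immediately yields the sharper statement for arbitrary $a$.
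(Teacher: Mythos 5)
Your two-step factorization (``$a$-strong $\Rightarrow$ partial residual coordinate system $\Rightarrow$ partial coordinate system'') and the prime-by-prime case analysis on whether $a\in\p$ are exactly the skeleton of the paper's argument, and your first move is carried out correctly. But there is a genuine gap in the second move: the residual-coordinate theorem you invoke (Theorem \ref{colength 1}, i.e.\ \cite[Corollary 3.19]{DD}, with the $n=2$ case in \cite[Theorem 3.1]{BDr}) is only available for \emph{Noetherian} rings containing $\bQ$, whereas the $R$ in Theorem \ref{essen} is an arbitrary ring containing $\bQ$. As written, your proof only establishes the theorem when $R$ is Noetherian; for general $R$ the implication ``partial residual coordinate system $\Rightarrow$ partial coordinate system'' is not something you can cite.

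The paper closes this gap with a Noetherian reduction, and the reduction has to be done at the level of the $a$-strong hypothesis rather than at the level of the residual property. One writes out explicit witnesses: equations expressing each $X_i$ as $G_i+aH_i$ with $G_i\in R[f_1,\dots,f_{n-1},g]$, and as an $R_a$-polynomial in $f_1,\dots,f_{n-1},h$ with denominators $a^{k_i}$; then one lets $S$ be the $\bQ$-subalgebra of $R$ generated by $a$ and all coefficients occurring in these data. This $S$ is a finitely generated $\bQ$-algebra, hence Noetherian, and your two-case analysis applies verbatim to the primes of $S$ because the witnessing equations live over $S$. This detour is necessary: the partial residual coordinate property you established over $R$ is a statement quantified over primes of $R$ and does not formally restrict to a subring $S$ (not every prime of $S$ need be a contraction of a prime of $R$), so one must rederive it over $S$ from the explicit equations. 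Finally, a coordinate system over $S[X_1,\dots,X_n]$ base-changes to one over $R[X_1,\dots,X_n]$. With that insertion your argument becomes the paper's proof; without it, the appeal to \cite{BDr,DD} is not justified.
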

They conjectured the following (\cite[Conjecture 4.4]{BBE}):

\medskip
\noindent
{\bf Conjecture.} 
Theorem \ref{essen} also holds even when $a$ is a zerodivisor in $R$.

\medskip
In this note we observe that an affirmative solution to the above conjecture can be deduced from the following formulation of a result of Das-Dutta (\cite[Corollary 3.19]{DD}).
\begin{thm}\label{colength 1}
Let $R$ be a Noetherian ring containing $\bQ$, $A=R^{[n]}$ and $f_1,\dots,f_{n-1}\in{A}$. Then the following are equivalent:
\begin{enumerate}
\item[\rm (i)] $f_1,\dots,f_{n-1}$ form a partial coordinate system in $A$.
\item[\rm (ii)] $f_1,\dots,f_{n-1}$ form a partial residual coordinate system in $A$.
\end{enumerate} 
\end{thm}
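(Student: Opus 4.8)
The implication (i) $\Rightarrow$ (ii) is purely formal. Writing $B := R[f_1,\dots,f_{n-1}]$, condition (i) says $A = B^{[1]}$, i.e.\ $A = B[T]$ for a single indeterminate $T$. Since base change commutes with the formation of polynomial rings, tensoring over $R$ with $k(\p)$ gives $k(\p)\otimes_R A = (k(\p)\otimes_R B)[T] = (k(\p)\otimes_R B)^{[1]}$ for every prime $\p$ of $R$, which is exactly (ii). Thus the whole content of the theorem is the converse (ii) $\Rightarrow$ (i), and this is what the plan below addresses.

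For (ii) $\Rightarrow$ (i) I would study the inclusion $B := R[f_1,\dots,f_{n-1}] \hookrightarrow A$ and show that it realizes $A$ as a one-variable polynomial ring over $B$. \emph{Step 1: $B$ is a polynomial ring.} The residual hypothesis forces $f_1,\dots,f_{n-1}$ to be algebraically independent over $R$: a nontrivial relation among them over $R$ would persist after reducing modulo a suitable prime $\p$, contradicting the fact that the images of the $f_i$ are part of a coordinate system in $k(\p)^{[n]}$ and hence algebraically independent over $k(\p)$. Therefore $B \cong R^{[n-1]}$, and $B \hookrightarrow A = R^{[n]}$ is an inclusion of polynomial $R$-algebras of relative dimension one.

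\emph{Step 2: $A$ is a faithfully flat, finitely generated $B$-algebra with all fibres $\A^1$.} Both $A = R^{[n]}$ and $B = R^{[n-1]}$ are free, hence flat, over $R$; and for each prime $\p$ of $R$ the hypothesis gives $A \otimes_R k(\p) = (B \otimes_R k(\p))^{[1]}$, which is free over $B \otimes_R k(\p)$. By the fibral criterion of flatness this makes $A$ flat over $B$, and surjectivity of $\operatorname{Spec} A \to \operatorname{Spec} B$ (every fibre is a nonempty $\A^1$) upgrades flatness to faithful flatness. Finite generation is clear, since $A$ is generated over $B$ by $X_1,\dots,X_n$. Thus $B \hookrightarrow A$ is a residual $\A^1$-fibration in the sense of the Bhatwadekar--Dutta theory.

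\emph{Step 3: globalization---the crux.} It remains to promote the fibre-wise triviality of Step 2 to the global identity $A = B^{[1]}$. This is the hard part, and it is emphatically not formal: a faithfully flat, finitely generated algebra with $\A^1$ fibres need not be a polynomial algebra, even over a regular base (Asanuma-type examples), so no pure descent argument suffices. The hypotheses that $R$ is Noetherian and contains $\bQ$ enter essentially here; the characteristic-zero input is felt through exponential maps / locally nilpotent derivations and through the residual-variable criteria of Bhatwadekar--Dutta and Das--Dutta, which additionally exploit that $A$ is itself a polynomial ring over the original base $R$. This globalization is precisely the statement extracted from \cite[Corollary 3.19]{DD}; the contribution of the present note is to recognize that this criterion, phrased for residual coordinates, yields the colength-one partial-coordinate equivalence stated above and hence settles the conjecture.
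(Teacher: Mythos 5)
Your direction (i) $\Rightarrow$ (ii) is fine, and your plan for (ii) $\Rightarrow$ (i) --- set up the $\A^1$-fibration picture and then hand the globalization to the residual-variable machinery, ultimately citing \cite[Corollary 3.19]{DD} --- is in the same spirit as the paper, which offers no independent proof and simply attributes the statement to that corollary. But there is a genuine gap: \cite[Corollary 3.19]{DD} is stated for Noetherian \emph{domains} containing $\bQ$, whereas the theorem you are proving asserts the equivalence over an arbitrary Noetherian ring containing $\bQ$. As written, your citation only covers the domain case. This is not a cosmetic point: in the application to the conjecture in Section 2, the theorem is invoked for a finitely generated $\bQ$-subalgebra $S$ of an arbitrary ring $R$ containing $\bQ$, and such an $S$ may have zero-divisors and nilpotents, so the extra generality is precisely what is needed. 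The paper's entire contribution to this statement is the observation, recorded in the remark following it, that the proof of \cite[Corollary 3.19]{DD} is an application of Theorems 3.13 and 2.4 of \cite{DD}, both of which hold over any Noetherian ring containing $\bQ$; some such justification (or an independent reduction to the domain case) must appear in your argument, and none does.

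The same issue undermines your Step 1. The claim that a nontrivial relation $P(f_1,\dots,f_{n-1})=0$ over $R$ ``would persist after reducing modulo a suitable prime $\p$'' is valid only if some coefficient of $P$ has nonzero image in some residue field $k(\p)$; if every coefficient of $P$ is nilpotent, then $P$ maps to zero in $k(\p)^{[n-1]}$ for every prime $\p$ and no contradiction arises. So exactly in the non-reduced case --- the case the theorem is being extended to cover --- your argument for $B\cong R^{[n-1]}$ breaks down. Fortunately that step is dispensable: the conclusion only requires $A=B^{[1]}$ for $B=R[f_1,\dots,f_{n-1}]$, not that $B$ itself be a polynomial ring over $R$. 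The real burden of the proof sits entirely in your Step 3, and there the domain-versus-ring discrepancy must be addressed explicitly.
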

\begin{rem}
{\em We note the following observations on the above result

(i) Although the result  is stated in the paper \cite{DD} for Noetherian domains containing $\bQ$, the proof is an application of Theorem 3.13 and Theorem 2.4 in \cite{DD} 
which hold over any Noetherain ring containing $\bQ$ (not necessarily a domain).
  
(ii) The case $n=2$ was previously proved by Bhatwadekar-Dutta (\cite[Theorem 3.1]{BDr}). 
}
\end{rem}
\section{Proof of the conjecture}
\begin{thm}
Let $R$ be a ring containing $\bQ$, $a\in{R}$ be arbitrary and $A=R[X_1,\dots,X_n]$. If  $n-1$ polynomials $f_1,\dots,f_{n-1}$ in $A$
form an $a$-strongly partial residual coordinate system of colength $1$ in $A$, then $f_1,\dots,f_{n-1}$ form a partial coordinate system in $A$.
\end{thm}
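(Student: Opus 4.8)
The plan is to combine a routine Noetherian descent with the single observation that, between them, the two halves of the $a$-strongly partial residual hypothesis cover every prime of the base ring, so that the Das--Dutta result (Theorem \ref{colength 1}) becomes directly applicable. First I would reduce to the Noetherian case. Since $f_1,\dots,f_{n-1}$ involve only finitely many coefficients, and since each of the two partial coordinate systems---over $A_a$ and over $A/aA$---is witnessed by a complementary coordinate together with finitely many polynomial relations expressing the $X_j$ in terms of the $f_i$ and that complement, I would choose a finitely generated $\bQ$-subalgebra $R_0\subseteq R$ containing $a$, all coefficients of the $f_i$, and all coefficients occurring in these witnessing data. By the Hilbert basis theorem $R_0$ is Noetherian with $\bQ\subseteq R_0$, and by construction $f_1,\dots,f_{n-1}$ already form an $a$-strongly partial residual coordinate system of colength $1$ in $A_0:=R_0[X_1,\dots,X_n]$. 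Moreover $A=R\otimes_{R_0}A_0$ and $R[f_1,\dots,f_{n-1}]=R\otimes_{R_0}R_0[f_1,\dots,f_{n-1}]$.

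Next I would upgrade the $a$-strong hypothesis over $R_0$ to the full partial residual condition. Fix a prime $\p$ of $R_0$. If $a\notin\p$, then $k(\p)$ is an $(R_0)_a$-algebra, and base-changing the identity $(R_0)_a[X_1,\dots,X_n]=(R_0)_a[f_1,\dots,f_{n-1}]^{[1]}$ along $(R_0)_a\to k(\p)$ yields $k(\p)\otimes_{R_0}A_0=(k(\p)\otimes_{R_0}R_0[f_1,\dots,f_{n-1}])^{[1]}$. If instead $a\in\p$, then $k(\p)$ is an $R_0/aR_0$-algebra, and the same base change applied to the partial coordinate system over $A_0/aA_0$ gives the identical conclusion. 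Since every prime falls into exactly one of these two cases, $f_1,\dots,f_{n-1}$ form a partial residual coordinate system in $A_0$. Theorem \ref{colength 1}, valid over the Noetherian ring $R_0\supseteq\bQ$, then gives $A_0=R_0[f_1,\dots,f_{n-1}]^{[1]}$, a partial coordinate system over $R_0$. Tensoring this identity with $R$ over $R_0$ and using the base-change identities above yields $A=R[f_1,\dots,f_{n-1}]^{[1]}$, as required.

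The conceptual crux---and what dispenses with any hypothesis on $a$, zerodivisor or not---is the case split in the middle step: inverting $a$ handles the primes of $D(a)$ and killing $a$ handles the primes of $V(a)$, and together these exhaust $\operatorname{Spec}R_0$. The step requiring the most care is the Noetherian descent: one must verify that the existence of the complementary coordinates and the invertibility of the relevant structure maps over $A_a$ and $A/aA$ are each encoded by finitely many equations, so that they genuinely descend to, and later ascend from, the finitely generated subalgebra $R_0$. Once this bookkeeping is in place, the remainder is formal base change together with the single appeal to Theorem \ref{colength 1}.
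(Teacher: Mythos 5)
Your proposal is correct and takes essentially the same route as the paper: descend to a finitely generated, hence Noetherian, $\bQ$-subalgebra of $R$ containing $a$ and all coefficients of the data witnessing the two partial coordinate systems, verify the partial residual coordinate condition at each prime $\p$ of that subalgebra by splitting into the cases $a\in\p$ and $a\notin\p$, apply Theorem \ref{colength 1}, and ascend back to $R$. The only difference is presentational: the paper records the witnessing data as explicit equations expressing each $X_i$ in terms of $f_1,\dots,f_{n-1}$ and the complementary coordinate, whereas you phrase the middle step as formal base change from $(R_0)_a$ and $R_0/aR_0$ to $k(\p)$.
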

\begin{proof}
Since $f_1,\dots,f_{n-1}$ form an $a$-strongly partial residual coordinate system in $A$, 
there exist $g, h\in{A}$ such that
$$
\dfrac{R}{aR}[X_1,\dots,X_{n}]=\dfrac{R}{aR}[\overline{f_1},\dots,\overline{f_{n-1}},\overline{g}]
$$
and 
$$
R_a[X_1,\dots,X_{n}]=R_a[f_1,\dots,f_{n-1},h].
$$
Hence we have, 
\begin{equation}\label{mod}
X_i = G_i + a H_i
\end{equation}

for some $G_i\in{R[f_1,\dots,f_{n-1},g]}$ and $H_i\in{A}$ $(1\leq{i}\leq{n})$;\\
  
 and
 \begin{equation}\label{local}
  X_i = \sum{\dfrac{e_{i,m_1,\dots,m_{n-1},m}}{a^{k_i}}{f_1^{m_1}f_2^{m_2}\dots f_{n-1}^{m_{n-1}}h^m}}
 \end{equation}
 where $e_{i,m_1,\dots,m_{n-1},m}\in{R}$, for all ${i,m_j,m}$  $(1\leq{i}\leq{n}, 1\leq{j}\leq{n-1},{m_j,m,k_i}\geq{0})$.

Now, let $S$ be the $\bQ$-subalgebra of 
$R$ generated by the subset  of $R$ consisting of $a$; all coefficients of $G_i$ (for all $i$)  as polynomials in $f_1,\dots,f_{n-1},g$; coefficients
of $H_i$ (for all $i$); coefficients of $g, h, f_i$ (for all $i$) and $e_{i,m_1,\dots,m_{n-1},m}$ (for all $i$ and for all ${m,m_j})$.
Then $S$ is a Noetherian $\bQ$-algebra. Let  $\p$ be an arbitrary prime ideal of $S$ and $B:=S[X_1,\dots,X_n]$.
If $a\in{\p}$ then from (\ref{mod}), we have $k(\p)\otimes_S{B}=(k(\p)\otimes_{S}{S[f_1,\dots,f_{n-1}]})^{[1]}$.
If $a\notin{\p}$ then from (\ref{local}), we have $k(\p)\otimes_S{B}=(k(\p)\otimes_{S}{S[f_1,\dots,f_{n-1}]})^{[1]}$. 
So, $f_1,\dots,f_{n-1}$ form a partial residual coordinate system (of colength $1$) in $B$. Since $S$ is Noetherian, hence by Theorem \ref{colength 1}, $f_1,\dots,f_{n-1}$ form a partial coordinate system  in $B$ and hence in $A$.
\end{proof}

\bigskip
\noindent
{\bf Acknowledgements.} The author acknowledges Council of Scientific and Industrial Research (CSIR) for their research grant.  

{\small{

}}
\end{document}